\title{Closed formulas for fractional chromatic polynomials of some common classes of graphs}
\author{Recuero, P. M.}
\date{}
\newtheorem{definition}{Definition}
\newtheorem{lemma}{Lemma}
\newtheorem{proposition}{Proposition}
\begin{document}
\maketitle
\begin{abstract}
Chromatic polynomials have been studied extensively, giving us results such as the Fundamental
Reduction Theorem and closed formulas for the chromatic polynomial of common classes of graphs.
Though, none of those extend to the context of fractional colorings. We thus present closed
formulas for the ``fractional" chromatic polynomial - a function that counts the number of distinct
$b$-fold $\lambda$-colorings on a given graph - of complete graphs, trees and forests, as well
as a generalized form of the Fundamental Reduction Theorem.
\end{abstract}

\section{Introduction}
A mapping $f: V_G \to [\lambda]$ is called a \textit{legal $\lambda$-coloring}
\footnote{Some authors refer to legal colorings merely as ``colorings", leaving it implicit
that the colorings in question are legal. For most of
this text we will also be doing so, save for a few exceptions, where we specify it out of convenience
or necessity.}
on the graph $G$ if for all adjacent vertices $u$ and $v$ of $G$, $f(u)\neq f(v)$.\\
Two colorings $f_1$ and $f_2$ on the graph $G$ are regarded as \textit{distinct} if
there exists some vertex $u$ in $G$ such that $f_1(u)\neq f_2(u)$.\\
The \textit{chromatic polynomial} of $G$, usually denoted as $\mathcal{P}(G,\lambda)$,
is the function that when inputted with positive integer $\lambda$, outputs the number
of distinct $\lambda$-colorings on the graph $G$. For instance, the function
\begin{equation}
\mathcal{P}(K_n,\lambda)=\prod_{j=0}^{n-1}(\lambda-j)
\end{equation}
tells us the number of distinct $\lambda$-colorings on the complete graph on $n$ vertices.\\
An important result regarding chromatic polynomials due to Whitney [3] is the following:
given a graph $G$, if $G+uv$ is the graph obtained from $G$ by adding an edge between two
non-adjacent vertices $u$ and $v$ of $G$, and $G/uv$ the graph obtained from $G$ by merging
those two vertices, then
\begin{equation*}
\mathcal{P}(G,\lambda)=\mathcal{P}(G+uv,\lambda)+\mathcal{P}(G/uv,\lambda)
\tag{FRT 1}
\end{equation*}
Similarly, if $G-uv$ is the graph obtained from $G$ by deleting the edge between two adjacent
vertices $u$ and $v$ of $G$ and $G/uv$ obtained by merging them, then
\begin{equation*}
\mathcal{P}(G,\lambda)=\mathcal{P}(G-uv,\lambda)-\mathcal{P}(G/uv,\lambda)
\tag{FRT 2}
\end{equation*}
Together, these recurrence relations are known as the \textit{Fundamental Reduction Theorem}, and
provide us with a recursive way of computing the chromatic polynomial of a given graph.
\medskip\\
\textit{Fractional} coloring is a generalization of the ``ordinary" coloring described above
where instead of each vertex being assigned a single color, each vertex is assigned a set - possibly
containing more than one element - of colors. See Figure 1.1 for an example.
\begin{samepage}
\begin{center}
\begin{tikzpicture}
\draw[black, very thick] (0,1) -- (0.9512,0.309);
\draw[black, very thick] (0,1) -- (-0.9512,0.309);
\draw[black, very thick] (0.5878,-0.809) -- (0.9512,0.309);
\draw[black, very thick] (-0.5878,-0.809) -- (-0.9512,0.309);
\draw[black, very thick] (0.5878,-0.809) -- (-0.5878,-0.809);

\begin{scope}
\clip (0.9512,0.309) circle (0.25);
                \fill[blue!50] (0.9512,0.059) rectangle (0.7012,0.559);
                \fill[yellow!50] (0.9512,0.059) rectangle (1.2012,0.559);
\end{scope}
\draw[black, very thick] (0.9512,0.309) circle (0.25);

\begin{scope}
\clip (-0.9512,0.309) circle (0.25);
                \fill[blue!50] (-0.9512,0.059) rectangle (-1.2012,0.559);
                \fill[pink!50] (-0.9512,0.059) rectangle (-0.7012,0.559);
\end{scope}
\draw[black, very thick] (-0.9512,0.309) circle (0.25);

\begin{scope}
\clip (0.5878,-0.809) circle (0.25);
                \fill[red!50] (0.5878,-1.059) rectangle (0.3378,-0.559);
                \fill[pink!50] (0.5878,-1.059) rectangle (0.8378,-0.559);
\end{scope}
\draw[black, very thick] (0.5878,-0.809) circle (0.25);

\begin{scope}
\clip (-0.5878,-0.809) circle (0.25);
                \fill[green!50] (-0.5878,-1.059) rectangle (-0.8378,-0.559);
                \fill[yellow!50] (-0.5878,-1.059) rectangle (-0.3378,-0.559);
\end{scope}
\draw[black, very thick] (-0.5878,-0.809) circle (0.25);

\begin{scope}
\clip (0,1) circle (0.25);
                \fill[green!50] (0,0.75) rectangle (-0.25,1.25);
                \fill[red!50] (0,0.75) rectangle (0.25,1.25);
\end{scope}
\draw[black, very thick] (0,1) circle (0.25);

\end{tikzpicture}
\end{center}
\begin{center}
\textit{Fig. 1.1}
\end{center}
\end{samepage}
\pagebreak
\begin{samepage}
We say then that a mapping $f:V_G \to [\lambda]^b$ is a (legal) $b$-fold $\lambda$-coloring on $G$
if for all adjacent vertices $u$ and $v$ of $G$, $f(u) \cap f(v) = \emptyset$. Thus we denote
the coloring on Figure 1.1 above as a $2$-fold $5$-coloring. Accordingly, an ordinary $\lambda$-coloring
would be called a $1$-fold $\lambda$-coloring
\footnote{We will use ``1-fold" and ``ordinary" interchangeably - whichever is better suited for the moment.}.\\
Informally, the requirements of fractional colorings are the same as the ones of ordinary colorings.
Though, in the context of fractional colorings, some requirements merely implicit or vacuous in ordinary
colorings become explicit, namely, (i) that all colors used to paint the same vertex be different from
one another, (ii) that adjacent vertices have no colors in common whatsoever and (iii) that all vertices
be painted with the same numer of colors.
\end{samepage}
\medskip\\
While closed formulas for the chromatic polynomials of the most common classes of graphs,
such as complete (equation (1) shown above) or cycle graphs, are known, they are restricted to ordinary colorings.
And likewise, so are the recurrence relations in the Fundamental Reduction Theorem. What we then present are generalizations
of the known formulas for the chromatic polynomials of complete graphs, trees and forests, and of the
recurrence relations in the Fundamental Reduction Theorem as well.

\section{Preliminaries
\protect\footnote{We begin by introducing a graph operation that shows us how the number of $b$-fold $\lambda$-colorings
on a given graph relates to the number of $1$-fold $\lambda$-colorings - which we can count - of another
graph. We first introduce the concept informally, as to motivate it, and then provide the due formalizations.}}
Suppose we are handed the graph $H$ consisting of two vertices, which we shall call $u$ and $v$, and
a single edge between them, and it is for us to decide whether or not $H$ is $3$-fold $\lambda$-colorable.
For that purpose, we first divide each vertex of $H$ in $3$ sections, each one of which
will be assigned a single color.
\begin{samepage}
\begin{center}
\begin{tikzpicture}
\filldraw[color=black!100, fill=black!0, very thick](-0.75,0) circle (0.5);
\filldraw[color=black!100, fill=black!0, very thick](0.75, 0) circle (0.5);
\draw[black, very thick] (-0.25,0) -- (0.25,0);
\draw[black, very thick] (-0.317,-0.25) -- (-0.75,0);
\draw[black, very thick] (-1.18,-0.25) -- (-0.75,0);
\draw[black, very thick] (-0.75, 0.5) -- (-0.75,0);
\draw[black, very thick] (0.317, -0.25) -- (0.75,0);
\draw[black, very thick] (1.18, -0.25) -- (0.75,0);
\draw[black, very thick] (0.75, 0.5) -- (0.75, 0);
\node at (-1.3,0.9){$H$};
\node at (-0.5,0.125){\small $u_3$};
\node at (-1,0.125){\small $u_1$};
\node at (-0.75,-0.25){\small $u_2$};
\node at (0.5,0.125){\small $v_1$};
\node at (1,0.125){\small $v_3$};
\node at (0.75,-0.25){\small $v_2$};
\end{tikzpicture}
\end{center}
\begin{center}
\textit{Fig. 2.1}
\end{center}
\end{samepage}
As per the requirements of fractional colorings earlier stated, two sections of $H$ must have
different colors assigned to them if (i) they belong to the same vertex or (ii) they belong to
adjacent vertices. Well, but this is an ordinary graph coloring problem in disguise - each
section is a vertex and sections having one of the above properties are adjacent
vertices. As such, let us construct a new graph, $H'$: for each section in $H$, let there
be a corresponding vertex in $H'$, and for every pair of sections in $H$ belonging to the
same or to adjacent vertices, let their corresponding vertices in $H'$ be adjacent.
\begin{samepage}
\begin{center}
\begin{tikzpicture}
\draw[black, very thick] (-1.5,2) -- (-1.5,1);
\draw[black, very thick] (-1.5,2) -- (-0.5,0);
\draw[black, very thick] (-1.5,2) -- (0.5,0);
\draw[black, very thick] (-1.5,2) -- (1.5,1);
\draw[black, very thick] (-1.5,2) -- (1.5,2);
\draw[black, very thick] (-1.5,1) -- (-0.5,0);
\draw[black, very thick] (-1.5,1) -- (0.5,0);
\draw[black, very thick] (-1.5,1) -- (1.5,1);
\draw[black, very thick] (-1.5,1) -- (1.5,2);
\draw[black, very thick] (-0.5,0) -- (0.5,0);
\draw[black, very thick] (-0.5,0) -- (1.5,1);
\draw[black, very thick] (-0.5,0) -- (1.5,2);
\draw[black, very thick] (0.5,0) -- (1.5,1);
\draw[black, very thick] (0.5,0) -- (1.5,2);
\draw[black, very thick] (1.5,1) -- (1.5,2);

\filldraw[color=black!100, fill=black!0, very thick] (-1.5,2) circle (0.375);
\filldraw[color=black!100, fill=black!0, very thick] (-1.5,1) circle (0.375);
\filldraw[color=black!100, fill=black!0, very thick] (-0.5,0) circle (0.375);
\filldraw[color=black!100, fill=black!0, very thick] (0.5,0) circle (0.375);
\filldraw[color=black!100, fill=black!0, very thick] (1.5,1) circle (0.375);
\filldraw[color=black!100, fill=black!0, very thick] (1.5,2) circle (0.375);

\node at (-1.5,2){\small $u_1$};
\node at (-1.5,1){\small $v_1$};
\node at (-0.5,0){\small $u_2$};
\node at (0.5,0){\small $v_2$};
\node at (1.5,1){\small $v_3$};
\node at (1.5,2){\small $u_3$};
\node at (-2.25,2.5){$H'$};

\end{tikzpicture}
\end{center}
\begin{center}
\textit{Fig. 2.2}
\end{center}
\end{samepage}
Clearly then, for any $3$-fold $\lambda$-coloring $f$ on $H$ there exists a corresponding
$1$-fold $\lambda$-coloring $f'$ on $H'$ such that $f$ is legal iff $f'$ is legal: we merely
paint vertices of $H'$ with the same colors as their corresponding sections in $H$; as adjacent
vertices are always paired with sections belonging to the same or adjacent vertices - which if
$f$ is legal, will not be painted with the same color -, the resulting coloring on $H'$ must be
legal as well.
\medskip\\
Indeed, we can apply such operation on any graph, and for any $b$, essentially turning
a $b$-fold coloring into a $1$-fold coloring. More formally,
\pagebreak
\begin{samepage}
\begin{definition}
Given a graph $G$, let $G^b$ be the graph constructed from $G$ as follows:
\medskip\\
1. For every vertex $u$ in $G$, let there be vertices $u_1, \ldots, u_b$ in $G^b$, and let
these vertices form a complete subgraph;\\
2. For every pair of vertices $u$ and $v$ in $G$, if $u$ and $v$ are adjacent, then let vertices
$u_i$ and $v_j$ of $G^b$ be adjacent as well, for all positive integers $i, j \leq b$.
\end{definition}
\end{samepage}
Thus $H'$ becomes $H^3$.
\begin{lemma}
$\chi_b(G) = \chi(G^b)$
\end{lemma}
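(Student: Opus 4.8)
The plan is to prove the identity by showing that the operation $G \mapsto G^b$ transfers the \emph{colorability} question faithfully: for every positive integer $\lambda$, the graph $G$ admits a legal $b$-fold $\lambda$-coloring if and only if $G^b$ admits a legal (ordinary) $\lambda$-coloring. Once this equivalence is in hand, the feasible palette sizes for the two problems coincide, and since $\chi_b(G)$ and $\chi(G^b)$ are by definition the minima of these two (equal) sets of feasible $\lambda$, the equality $\chi_b(G) = \chi(G^b)$ follows at once. So the whole proof reduces to making precise, and verifying in both directions, the informal correspondence already sketched for $H$ and $H^3$ in the preliminaries.

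For the forward direction, I would take a legal $b$-fold $\lambda$-coloring $f \colon V_G \to [\lambda]^b$ and manufacture an ordinary coloring $f'$ of $G^b$ as follows. For each vertex $u$ of $G$, the value $f(u)$ is a $b$-element subset of $[\lambda]$; fixing any enumeration $f(u) = \{c^u_1, \ldots, c^u_b\}$, I set $f'(u_i) = c^u_i$. Two checks establish that $f'$ is legal. If $u_i$ and $u_j$ with $i \neq j$ are adjacent -- which happens precisely because $u_1, \ldots, u_b$ form a clique -- then $f'(u_i) \neq f'(u_j)$ since $f(u)$, being a set of size $b$, has distinct elements. If instead $u_i$ and $v_j$ are adjacent with $u \neq v$, then by construction $u$ and $v$ are adjacent in $G$, so legality of $f$ gives $f(u) \cap f(v) = \emptyset$, whence $f'(u_i) \in f(u)$ and $f'(v_j) \in f(v)$ are different.

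For the reverse direction I would run the same correspondence backwards: given a legal $\lambda$-coloring $f'$ of $G^b$, define $f(u) = \{f'(u_1), \ldots, f'(u_b)\}$. Because $u_1, \ldots, u_b$ form a clique, the $b$ values $f'(u_1), \ldots, f'(u_b)$ are pairwise distinct, so $f(u)$ is genuinely a $b$-element subset of $[\lambda]$ and $f$ is a well-defined $b$-fold $\lambda$-coloring. For legality, if $u$ and $v$ are adjacent in $G$ and some color $c$ lay in $f(u) \cap f(v)$, then $c = f'(u_i) = f'(v_j)$ for some $i, j$, contradicting the fact that $u_i$ and $v_j$ are adjacent in $G^b$; hence $f(u) \cap f(v) = \emptyset$.

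I do not expect a serious obstacle here, as the argument is essentially a structural dictionary between the two colorings. The one point that warrants care -- and the closest thing to a subtlety -- is the well-definedness at the two ``seams'' created by the construction: in the forward direction the size-$b$ condition on $f(u)$ is exactly what lets the labeled sections $u_1, \ldots, u_b$ receive distinct colors, and in the reverse direction the clique among $u_1, \ldots, u_b$ is exactly what guarantees $f(u)$ has $b$ elements rather than fewer. It is also worth flagging explicitly that this correspondence is \emph{not} a bijection at the level of individual colorings (the forward map depends on an arbitrary ordering of each set $f(u)$), but since the statement concerns only the chromatic numbers we need nothing beyond the equivalence of colorability, so this mismatch is harmless.
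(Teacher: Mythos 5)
Your proposal is correct and follows essentially the same route as the paper: both arguments establish that $G$ is $b$-fold $\lambda$-colorable if and only if $G^b$ is $1$-fold $\lambda$-colorable, by translating colorings back and forth between sections of $G$ and vertices of $G^b$, and then read off the equality of chromatic numbers. Your write-up is simply more explicit (it spells out the reverse direction and the two adjacency cases that the paper leaves to an ``analogously''), but no new idea is involved.
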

\begin{proof}
Say we are given a legal $b$-fold $\lambda$-coloring $f$ on $G$. From any such $f$ we can
construct a legal $1$-fold $\lambda$-coloring $f'$ on $G^b$ as follows: pair sections of $G$
with vertices of $G^b$ in a way such that pairs of adjacent vertices of $G^b$ are paired with
sections belonging to the same or to adjacent vertices of $G$
\footnote{Such a pairing is certainly possible - we defined $H'$ from $H$ earlier from it. Indeed,
the two structures are isomorphic.}. Now, color every vertex of $G^b$
with the same color as the section that it is paired with. If $f$ is legal, then all pairs
of sections belonging to the same or adjacent vertices on $G$ must be assigned different colors,
and since those sections are the ones that correspond with adjacent vertices of $G^b$, in $f'$,
all adjacent vertices of $G^b$ are colored with different colors, thus $f'$ must be legal.\\
The converse can be proved analogously. The fact that for every legal $b$-fold $\lambda$-coloring
on $G$ there exists a legal $1$-fold $\lambda$-coloring on $G^b$ and \textit{vice-versa} implies
that $G$ is $b$-fold $\lambda$-colorable iff $G^b$ is $1$-fold $\lambda$-colorable, and thus,
\textit{a fortiori}, $\chi_b(G) = \chi(G^b)$.
\end{proof}
Now, seeing as for every legal $b$-fold $\lambda$-coloring on $G$ there exists a corresponding
$1$-fold $\lambda$-coloring on $G^b$, one might be inclined to think that there are as many
distinct $b$-fold $\lambda$-colorings on $G$ as there are $1$-fold $\lambda$-colorings on $G^b$,
or at least wonder how the two numbers relate to each other. Let us then examine a few examples
of corresponding colorings on the graphs $H$ and $H'$ with which we worked earlier.
\begin{samepage}
\begin{center}
\begin{tikzpicture}
\draw[black, very thick] (-1.125,1.5) -- (-1.125,0.75);
\draw[black, very thick] (-1.125,1.5) -- (-0.375,0);
\draw[black, very thick] (-1.125,1.5) -- (0.375,0);
\draw[black, very thick] (-1.125,1.5) -- (1.125,0.75);
\draw[black, very thick] (-1.125,1.5) -- (1.125,1.5);
\draw[black, very thick] (-1.125,0.75) -- (-0.375,0);
\draw[black, very thick] (-1.125,0.75) -- (0.375,0);
\draw[black, very thick] (-1.125,0.75) -- (1.125,0.75);
\draw[black, very thick] (-1.125,0.75) -- (1.125,1.5);
\draw[black, very thick] (-0.375,0) -- (0.375,0);
\draw[black, very thick] (-0.375,0) -- (1.125,0.75);
\draw[black, very thick] (-0.375,0) -- (1.125,1.5);
\draw[black, very thick] (0.375,0) -- (1.125,0.75);
\draw[black, very thick] (0.375,0) -- (1.125,1.5);
\draw[black, very thick] (1.125,0.75) -- (1.125,1.5);

\filldraw[color=black!100, fill=red!50, very thick] (-1.125,1.5) circle (0.28125);
\filldraw[color=black!100, fill=yellow!50, very thick] (-1.125,0.75) circle (0.28125);
\filldraw[color=black!100, fill=green!50, very thick] (-0.375,0) circle (0.28125);
\filldraw[color=black!100, fill=purple!50, very thick] (0.375,0) circle (0.28125);
\filldraw[color=black!100, fill=brown!50, very thick] (1.125,0.75) circle (0.28125);
\filldraw[color=black!100, fill=blue!50, very thick] (1.125,1.5) circle (0.28125);

\node at (-1.125,1.5){\small $u_1$};
\node at (-1.125,0.75){\small $v_1$};
\node at (-0.375,0){\small $u_2$};
\node at (0.375,0){\small $v_2$};
\node at (1.125,0.75){\small $v_3$};
\node at (1.125,1.5){\small $u_3$};

\draw[black, very thick] (-4.425,1.5) -- (-4.425,0.75);
\draw[black, very thick] (-4.425,1.5) -- (-3.675,0);
\draw[black, very thick] (-4.425,1.5) -- (-2.925,0);
\draw[black, very thick] (-4.425,1.5) -- (-2.175,0.75);
\draw[black, very thick] (-4.425,1.5) -- (-2.175,1.5);
\draw[black, very thick] (-4.425,0.75) -- (-3.675,0);
\draw[black, very thick] (-4.425,0.75) -- (-2.925,0);
\draw[black, very thick] (-4.425,0.75) -- (-2.175,0.75);
\draw[black, very thick] (-4.425,0.75) -- (-2.175,1.5);
\draw[black, very thick] (-3.675,0) -- (-2.925,0);
\draw[black, very thick] (-3.675,0) -- (-2.175,0.75);
\draw[black, very thick] (-3.675,0) -- (-2.175,1.5);
\draw[black, very thick] (-2.925,0) -- (-2.175,0.75);
\draw[black, very thick] (-2.925,0) -- (-2.175,1.5);
\draw[black, very thick] (-2.175,0.75) -- (-2.175,1.5);

\filldraw[color=black!100, fill=green!50, very thick] (-4.425,1.5) circle (0.28125);
\filldraw[color=black!100, fill=yellow!50, very thick] (-4.425,0.75) circle (0.28125);
\filldraw[color=black!100, fill=blue!50, very thick] (-3.675,0) circle (0.28125);
\filldraw[color=black!100, fill=purple!50, very thick] (-2.925,0) circle (0.28125);
\filldraw[color=black!100, fill=brown!50, very thick] (-2.175,0.75) circle (0.28125);
\filldraw[color=black!100, fill=red!50, very thick] (-2.175,1.5) circle (0.28125);

\node at (-4.425,1.5){\small $u_1$};
\node at (-4.425,0.75){\small $v_1$};
\node at (-3.675,0){\small $u_2$};
\node at (-2.925,0){\small $v_2$};
\node at (-2.175,0.75){\small $v_3$};
\node at (-2.175,1.5){\small $u_3$};

\draw[black, very thick] (2.175,1.5) -- (2.175,0.75);
\draw[black, very thick] (2.175,1.5) -- (2.925,0);
\draw[black, very thick] (2.175,1.5) -- (3.675,0);
\draw[black, very thick] (2.175,1.5) -- (4.425,0.75);
\draw[black, very thick] (2.175,1.5) -- (4.425,1.5);
\draw[black, very thick] (2.175,0.75) -- (2.925,0);
\draw[black, very thick] (2.175,0.75) -- (3.675,0);
\draw[black, very thick] (2.175,0.75) -- (4.425,0.75);
\draw[black, very thick] (2.175,0.75) -- (4.425,1.5);
\draw[black, very thick] (2.925,0) -- (3.675,0);
\draw[black, very thick] (2.925,0) -- (4.425,0.75);
\draw[black, very thick] (2.925,0) -- (4.425,1.5);
\draw[black, very thick] (3.675,0) -- (4.425,0.75);
\draw[black, very thick] (3.675,0) -- (4.425,1.5);
\draw[black, very thick] (4.425,0.75) -- (4.425,1.5);

\filldraw[color=black!100, fill=blue!50, very thick] (2.175,1.5) circle (0.28125);
\filldraw[color=black!100, fill=purple!50, very thick] (2.175,0.75) circle (0.28125);
\filldraw[color=black!100, fill=green!50, very thick] (2.925,0) circle (0.28125);
\filldraw[color=black!100, fill=brown!50, very thick] (3.675,0) circle (0.28125);
\filldraw[color=black!100, fill=yellow!50, very thick] (4.425,0.75) circle (0.28125);
\filldraw[color=black!100, fill=red!50, very thick] (4.425,1.5) circle (0.28125);

\node at (2.175,1.5){\small $u_1$};
\node at (2.175,0.75){\small $v_1$};
\node at (2.925,0){\small $u_2$};
\node at (3.675,0){\small $v_2$};
\node at (4.425,0.75){\small $v_3$};
\node at (4.425,1.5){\small $u_3$};
\end{tikzpicture}
\end{center}

\begin{center}
\includegraphics[scale=0.25]{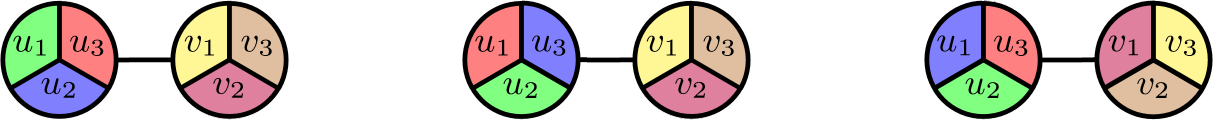}
\end{center}
\begin{center}
\textit{Fig. 2.3}
\end{center}
\end{samepage}
Indeed, while the three colorings on $G^b$ above are as distinct from one another, none
of their corresponding colorings on $G$, shown below each one of them, are distinct - in fact,
they only count as one coloring
\footnote{Recall that we only regard two colorings as distinct if the colors of the vertices are
different - the disposition of the colors among the sections is irrelevant.}.
Therefore, there cannot be the same number of distinct colorings
on the two graphs.
\begin{lemma}
Let $\mathcal{P}(G,\lambda,b)$ denote the number of distinct $b$-fold $\lambda$-colorings on the
graph $G$, or the \emph{fractional chromatic polynomial} of $G$. Then
\begin{equation}
\mathcal{P}(G,\lambda,b) = \frac{\mathcal{P}(G^b,\lambda,1)}{(b!)^{|V_G|}}
\end{equation}
\end{lemma}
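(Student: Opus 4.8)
The plan is to set up an explicit many-to-one correspondence between the legal $1$-fold $\lambda$-colorings of $G^b$ and the legal $b$-fold $\lambda$-colorings of $G$, and then to show that every fiber of this correspondence has exactly $(b!)^{|V_G|}$ elements. First I would record what legality on $G^b$ forces. Since $u_1,\dots,u_b$ form a complete subgraph, any legal coloring $g$ of $G^b$ assigns $b$ \emph{distinct} colors to them; and since each $u_i$ is adjacent to each $v_j$ whenever $u$ and $v$ are adjacent in $G$, the color sets $\{g(u_1),\dots,g(u_b)\}$ and $\{g(v_1),\dots,g(v_b)\}$ are disjoint across every edge $uv$ of $G$. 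Hence the rule $\Phi(g)(u):=\{g(u_1),\dots,g(u_b)\}$ sends each legal $1$-fold coloring $g$ of $G^b$ to a map that assigns a $b$-element color set to every vertex of $G$, with disjoint sets on adjacent vertices --- that is, to a legal $b$-fold $\lambda$-coloring of $G$, so $\Phi$ is well defined.

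Next I would verify that $\Phi$ is surjective. Given any legal $b$-fold coloring $f$ of $G$, choose for each vertex $u$ an arbitrary ordering of the $b$ colors in $f(u)$ and set $g(u_i)$ to be the $i$-th color; the resulting $g$ is legal, since the colors on each clique $\{u_1,\dots,u_b\}$ are distinct by construction and the disjointness across edges is inherited directly from $f$. Thus every legal $b$-fold coloring of $G$ has at least one preimage under $\Phi$.

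The heart of the argument is the counting of fibers. Fixing a legal $b$-fold coloring $f$ of $G$, a coloring $g$ lies in $\Phi^{-1}(f)$ precisely when, for every vertex $u$, the map $i\mapsto g(u_i)$ is a bijection from $\{1,\dots,b\}$ onto the $b$-element set $f(u)$. There are exactly $b!$ such bijections per vertex, and the choices at distinct vertices are independent, so $|\Phi^{-1}(f)|=(b!)^{|V_G|}$. I would also confirm that all of these colorings are genuinely legal --- reordering the colors within a vertex changes neither which colors appear on that vertex nor the disjointness on its incident edges --- and genuinely distinct as colorings of $G^b$, since any two distinct tuples of orderings differ on some $u_i$, whose colors are forced to be pairwise distinct by the clique structure.

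Finally, summing over all legal $b$-fold colorings $f$ of $G$ and using that the fibers of $\Phi$ partition the legal $1$-fold colorings of $G^b$ yields $\mathcal{P}(G^b,\lambda,1)=(b!)^{|V_G|}\,\mathcal{P}(G,\lambda,b)$, which is the stated formula after dividing through. I expect the only delicate point to be the fiber computation: one must check both that \emph{every} ordering produces a legal coloring (so the fiber is no smaller than $(b!)^{|V_G|}$) and that \emph{no} two orderings collapse to the same coloring (so it is no larger), and both facts hinge precisely on the clique among $u_1,\dots,u_b$ that the construction of $G^b$ places on each vertex.
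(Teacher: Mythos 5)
Your proof is correct and follows essentially the same route as the paper's: both arguments rest on the observation that the legal $1$-fold $\lambda$-colorings of $G^b$ lying over a fixed legal $b$-fold $\lambda$-coloring of $G$ are exactly the $(b!)^{|V_G|}$ independent permutations of colors within each vertex's clique, every one of which is legal and no two of which coincide. Your version merely makes this explicit as a surjection $\Phi$ with uniform fibers, which is a tightening of the paper's more informal phrasing rather than a different argument.
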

\begin{proof}
Given any $b$-fold $\lambda$-coloring on $G$, if we permute the colors of the sections inside
an individual vertex - as one can see is the case in Figure 2.3 -, the resulting coloring on
$G$ is not distinct, but its corresponding coloring on $G^b$ is indeed distinct from the coloring
on $G^b$ corresponding to the initial coloring on $G$. For each individual $b$-fold $\lambda$-
coloring on $G$ there exist $(b!)^{|V_G|}$ such permutations, and thus there must be at least
$(b!)^{|V_G|}$ times more distinct $1$-fold $\lambda$-colorings on $G^b$ as there are $b$-fold
$\lambda$-colorings on $G$.\\
Conversely, given any two indistinct $b$-fold $\lambda$-coloring $f$ and $g$ on $G$ such that
their corresponding $1$-fold $\lambda$-colorings $f'$ and $g'$ on $G^b$ are distinct, they must
be permutations of one another, so there are exactly $(b!)^{|V_G|}$ times more distinct
$1$-fold $\lambda$-colorings on $G^b$ as there are $b$-fold $\lambda$-colorings on $G$.
\end{proof}
\pagebreak
\section{Complete graphs}
Before employing the above to derive a closed formula for the fractional chromatic polynomials
of complete graphs, we just need one more detail,
\begin{lemma}
If $G$ is a complete graph, then so is $G^b$.
\end{lemma}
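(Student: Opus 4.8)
The plan is to appeal directly to Definition 1 and show that every pair of distinct vertices of $G^b$ is adjacent, which is exactly the condition for $G^b$ to be a complete graph. Writing $G = K_n$, I would first record that $G^b$ has $nb$ vertices, namely $u_1, \ldots, u_b$ arising from each of the $n$ vertices $u$ of $G$. It then suffices to verify that these $nb$ vertices are pairwise adjacent, which would in fact identify $G^b$ as $K_{nb}$.

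The core of the argument is a two-case analysis on an arbitrary pair of distinct vertices of $G^b$, say $u_i$ and $v_j$. In the first case $u = v$ but $i \neq j$: here the first clause of Definition 1 places $u_1, \ldots, u_b$ in a common complete subgraph, so $u_i$ and $u_j$ are adjacent. In the second case $u \neq v$: since $G = K_n$ is complete, the vertices $u$ and $v$ are adjacent in $G$, and hence the second clause of Definition 1 makes $u_i$ and $v_j$ adjacent for every choice of indices $i, j \leq b$. In either case the chosen pair is adjacent, which is what we wanted.

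The only point requiring any care is the clean separation into these two cases together with the observation that the completeness of $G$ is precisely what guarantees the hypothesis of the second clause of Definition 1 is satisfied for every distinct pair $u, v$; once that is noted the conclusion is immediate. I do not expect a genuine obstacle here, as the lemma is essentially a direct unpacking of the construction $G \mapsto G^b$. Its purpose is to combine with Lemma 2 and the known formula (1) for $\mathcal{P}(K_m, \lambda)$ in order to yield a closed form for $\mathcal{P}(K_n, \lambda, b)$ in the sequel.
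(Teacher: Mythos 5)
Your proof is correct and follows essentially the same route as the paper --- a direct unpacking of Definition 1 --- except that the paper argues by contradiction and invokes only the second clause of that definition. Your explicit two-case split is in fact slightly more careful: the paper's contradiction argument overlooks the possibility that a non-adjacent pair $u_i$, $u_j$ of $G^b$ arises from a single vertex $u$ of $G$ (the case settled by the first clause, which you handle explicitly), so nothing is missing from your version.
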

\begin{proof}
This is an easy consequence of Definition 1. Suppose that $G$ is a
complete graph and $G^b$ is not. Then, there is some pair
of vertices $u$ and $v$ in $G^b$ with no edge between them.
But then, by the second part of Definition 1, there must be a corresponding
pair of vertices in $G$ with no edge between them. But this cannot be,
since $G$ is complete.
\end{proof}
More specifically,
\begin{equation}
K_{n}^{b} = K_{n \cdot b}
\end{equation}
And now we are equipped to find our closed formula.
We combine (2) and (3) and obtain
\begin{equation*}
\mathcal{P}(K_n,\lambda,b)=\frac{\mathcal{P}(K_{n \cdot b},\lambda,1)}{(b!)^n}
\end{equation*}
Then, we expand the numerator with the aid of (1) and get
\begin{equation*}
=\frac{\prod_{j=0}^{n \cdot b - 1} (\lambda - j)}{(b!)^n}
\end{equation*}
which can be simplified further, yielding
\begin{equation}
\mathcal{P}(K_n,\lambda,b) = \prod_{j=0}^{n-1} \binom{\lambda-bj}{b}
\end{equation}
\section{Trees and forests}
\begin{lemma}
If $T$ is a tree graph, then
\begin{equation}
\mathcal{P}(T,\lambda,b) = \binom{\lambda}{b}\binom{\lambda-b}{b}^{|V_T|-1}
\end{equation}
\end{lemma}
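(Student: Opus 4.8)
The plan is to argue by induction on the number of vertices $n = |V_T|$, building the tree up one leaf at a time. One could in principle route through Lemma 2 by computing $\mathcal{P}(T^b,\lambda,1)$ directly, but since that count would itself be obtained by a deletion argument on $T^b$, it is cleaner to work with the fractional polynomial from the outset. For the base case $n = 1$, the tree is a single isolated vertex subject to no adjacency constraints, so a $b$-fold $\lambda$-coloring is merely a choice of $b$ colors from the $\lambda$ available, of which there are $\binom{\lambda}{b}$; this agrees with the claimed formula, whose exponent $|V_T| - 1$ vanishes.

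For the inductive step I would invoke the standard facts that every tree on $n \geq 2$ vertices has at least one leaf $\ell$, and that deleting $\ell$ together with its unique incident edge yields a tree $T'$ on $n-1$ vertices. The crux is then a single counting fact: each $b$-fold $\lambda$-coloring of $T'$ extends to exactly $\binom{\lambda-b}{b}$ distinct $b$-fold $\lambda$-colorings of $T$. Indeed, $\ell$ is adjacent only to its neighbor $p \in V_{T'}$, so the sole restriction on the color set assigned to $\ell$ is that it be disjoint from $f(p)$. Since every coloring assigns $p$ precisely $b$ colors, there remain $\lambda - b$ admissible colors from which we must choose $b$, giving $\binom{\lambda-b}{b}$ extensions, a number independent of the chosen coloring of $T'$. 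Hence
\[
\mathcal{P}(T,\lambda,b) = \binom{\lambda-b}{b}\,\mathcal{P}(T',\lambda,b),
\]
and substituting the inductive hypothesis $\mathcal{P}(T',\lambda,b) = \binom{\lambda}{b}\binom{\lambda-b}{b}^{n-2}$ yields $\binom{\lambda}{b}\binom{\lambda-b}{b}^{n-1}$, as required.

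The step that demands the most care is justifying that these $\binom{\lambda-b}{b}$ extensions are truly independent of the rest of the coloring: that the only vertex constraining $\ell$ is $p$, and that the count is unaffected by which particular $b$ colors $f(p)$ happens to occupy (it is not, by the symmetry of the $\lambda$ colors). This is precisely where acyclicity is indispensable. In a graph containing cycles, a newly appended vertex could be forced to avoid several already-colored neighbors at once, so the number of valid extensions would depend on how much those neighbors' color sets overlap, and no clean multiplicative recurrence would hold. For trees this complication never arises, and the recurrence telescopes cleanly to the stated closed form.
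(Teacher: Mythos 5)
Your proof is correct and is essentially the same argument as the paper's: the paper colors the tree vertex by vertex starting from an arbitrary vertex, observing $\binom{\lambda}{b}$ choices for the first vertex and $\binom{\lambda-b}{b}$ for each subsequent one (which, at the moment it is colored, has exactly one already-colored neighbor). Your leaf-deletion induction is just the rigorous, reverse-order formalization of that same sequential counting, with the key multiplicative step identical in both.
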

\begin{proof}
We already know that
\begin{equation*}
\mathcal{P}(T,\lambda,1) = \lambda(\lambda-1)^{|V_T|-1}
\end{equation*}
And there is an intuitive explanation behind this formula: first,
suppose $T$ is a path graph with vertices $u_1,\ldots,u_{|V_T|}$ - where $u_1$ and $u_{|V_T|}$ are end vertices -,
and we are given $T$ ``blank" and it is our task to $1$-fold $\lambda$-color $T$ properly. Due to the simple,
acyclic nature of path (and tree) graphs, we can do so with the following procedure: we begin by coloring some vertex of $T$,
$u_i$, chosen at random. Since all vertices are blank, we can choose any one of the $\lambda$ colors in the palette to color
$u_i$. We choose one of those and move on to color the adjacent vertex $u_{i+1}$. Now, to color $u_{i+1}$ we cannot use the same
color as the one we just did for $u_i$, so there are now only $\lambda-1$ colors for us to choose. Again, we choose one and
move on to color $u_{i+2}$. And again, to color $u_{i+2}$ we have only $\lambda-1$ colors to choose from - we
cannot use the same color we just did for $u_{i+1}$, but we can use the same color we did for $u_i$. And again we pick some
and move on to the next vertex, and so on, until we reach the end vertex $u_{|V_T|}$.\\
If $i=1$, \textit{i.e.} the initial vertex chosen was an end vertex, then through this procedure we have colored $T$ entirely.
Otherwise, we perform another run of the procedure: immediately after coloring $u_{|V_T|}$, we go color $u_{i-1}$ - for which
we will also have $\lambda-1$ choices of colors - and proceed headed to the other end vertex, $u_1$.
\medskip\\
For the initial vertex that we picked we had $\lambda$ choices of colors, and for all other vertices, $\lambda-1$ choices,
so in total there are $\lambda(\lambda-1)^{|V_T|-1}$ distinct $1$-fold $\lambda$-colorings on $T$. Naturally, this procedure
works for tree graphs which are not path graphs as well - at the end of each run, we start over by painting some blank vertex
adjacent to some colored vertex, until all vertices are colored - and indeed the formula above counts the number of distinct
$1$-fold $\lambda$-colorings on all tree graphs.
\medskip\\
Using the same procedure to count the number of $b$-fold $\lambda$-colorings ($\binom{\lambda}{b}$ choices on the initial
vertex and $\binom{\lambda-b}{b}$ choices on the remaining vertices) we then obtain (5).
\end{proof}
Indeed, (5) also gives us the number of distinct $b$-fold $\lambda$-colorings on individual components of a given forest graph,
\textit{i.e.}
\begin{equation*}
\binom{\lambda}{b}\binom{\lambda-b}{b}^{|c_1|-1}\binom{\lambda}{b}\binom{\lambda-b}{b}^{|c_2|-1}\cdots\binom{\lambda}{b}\binom{\lambda-b}{b}^{|c_k|-1}
\end{equation*}
is the number of distinct $b$-fold $\lambda$-colorings on a forest with $k$ components, where $|c_i|$ denotes the order of the $i$-th component.\\
Noting that in any forest $F$ with $k$ components, $|c_1|+|c_2|+\ldots+|c_k|=|V_F|$, we can simplify the above, ending up with
\begin{equation}
\mathcal{P}(F,\lambda,b)=\binom{\lambda}{b}^{k}\binom{\lambda-b}{b}^{|V_F|-k}
\end{equation}
\section{The Fundamental Reduction Theorem}
In Section 1, we mentioned that the recurrence relations of the Fundamental Reduction Theorem did not extend
to fractional colorings, but we did not demonstrate it. We will do so now.
\begin{proposition}
There exists at least one graph $G$, such that if $b>1$, then for some $\lambda$,
\begin{equation*}
\mathcal{P}(G,\lambda,b)\neq\mathcal{P}(G+uv,\lambda,b)+\mathcal{P}(G/uv,\lambda,b)
\end{equation*}
or 
\begin{equation*}
\mathcal{P}(G,\lambda,b)\neq\mathcal{P}(G-uv,\lambda,b)-\mathcal{P}(G/uv,\lambda,b)
\end{equation*} 
\end{proposition}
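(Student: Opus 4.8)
The plan is to establish this existence statement by exhibiting a single explicit counterexample, computing every term in the recurrences using only the closed formulas $(4)$, $(5)$ and $(6)$ derived above. I would take $G = \overline{K_2}$, the graph consisting of two non-adjacent vertices $u$ and $v$. Because $u$ and $v$ are non-adjacent, the applicable identity is the first one (the relation featuring $G+uv$), and since the proposition only asks for one of the two relations to fail, falsifying this one suffices. The three graphs involved are readily identified: $G$ itself is a forest on $|V_G|=2$ vertices with $k=2$ components; $G+uv = K_2$; and $G/uv = K_1$.

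Next I would evaluate each fractional chromatic polynomial from the formulas at hand. Formula $(6)$ gives $\mathcal{P}(G,\lambda,b) = \binom{\lambda}{b}^2$; formula $(4)$ (or equally $(5)$) gives $\mathcal{P}(G+uv,\lambda,b) = \binom{\lambda}{b}\binom{\lambda-b}{b}$; and $\mathcal{P}(G/uv,\lambda,b) = \binom{\lambda}{b}$. Substituting these into the first recurrence and cancelling the common factor $\binom{\lambda}{b}$, which is nonzero whenever $\lambda \geq b$, reduces the claimed identity to the single equation
\[
\binom{\lambda}{b} = \binom{\lambda-b}{b} + 1 .
\]

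Finally I would show this equation fails for every $b>1$. Setting $b=1$ returns $\lambda = (\lambda-1)+1$, an identity — a useful check that the ordinary case is genuinely preserved, so that any discrepancy is a true fractional phenomenon rather than an arithmetic slip. For $b>1$ I would make the uniform choice $\lambda = 2b$: the left-hand side becomes $\binom{2b}{b}$ while the right-hand side becomes $\binom{b}{b}+1 = 2$, and since $\binom{2b}{b} \geq \binom{4}{2} = 6 > 2$ for all $b \geq 2$, the equation is violated. Thus the single graph $G = \overline{K_2}$ defeats the first recurrence for every $b>1$ (concretely, at $b=2,\ \lambda=4$ one has $36 \neq 6+6$), proving the proposition. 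There is no deep obstacle here, the statement being an existence claim settled by a counterexample; the only points demanding care are the quantifier structure — one fixed $G$ must work for \emph{all} $b>1$, which is why I prefer the uniform substitution $\lambda = 2b$ over an ad hoc pair — and the legitimacy of cancelling $\binom{\lambda}{b}$, guaranteed by $2b \geq b$.
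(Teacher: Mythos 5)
Your proposal is correct, and it follows the same overall strategy as the paper --- choose a small explicit graph, evaluate every term of one recurrence with the closed formulas, cancel a common binomial factor, and exhibit a numerical failure --- but the counterexample itself is genuinely different. The paper takes $G=K_3$ and falsifies (FRT~2), reducing it via (4) and (5) to $\binom{\lambda-2b}{b}=\binom{\lambda-b}{b}-1$ and then checking only the single instance $b=2$, $\lambda=6$. You take $G=\overline{K_2}$ and falsify (FRT~1), reducing it via (4) and (6) to $\binom{\lambda}{b}=\binom{\lambda-b}{b}+1$; all of your evaluations ($\binom{\lambda}{b}^2$ for the two isolated vertices, $\binom{\lambda}{b}\binom{\lambda-b}{b}$ for $K_2$, $\binom{\lambda}{b}$ for $K_1$) and the concrete check $36\neq 6+6$ at $b=2$, $\lambda=4$ are correct. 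What your version buys is a cleaner treatment of the quantifiers: the proposition asks for one fixed $G$ that works for \emph{every} $b>1$, and your uniform substitution $\lambda=2b$, giving $\binom{2b}{b}\geq 6>2$, disposes of all such $b$ at once, whereas the paper's proof explicitly verifies only $b=2$ and leaves the remaining cases to the reader. Your example is also one vertex smaller and requires formula (6) in place of (5); both are legitimate, and each argument independently establishes the proposition.
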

\begin{proof}
Indeed, the above is the case for most graphs, it actually being harder to find exceptions. As such, we provide only
a single example where (FRT 2) does not hold.\\
Let $G=K_3$. Then $G-uv=P_3$ and $G/uv=K_2$. We then plug (4) and (5) into (FRT 2) and obtain
\begin{equation*}
\prod_{j=0}^{2} \binom{\lambda-bj}{b} = \binom{\lambda}{b} \binom{\lambda-b}{b}^2 - \prod_{j=0}^{1} \binom{\lambda-bj}{b}
\end{equation*}
Expanding the products we arrive at
\begin{equation*}
\binom{\lambda}{b} \binom{\lambda-b}{b} \binom{\lambda-2b}{b} = \binom{\lambda}{b} \binom{\lambda-b}{b}^2 - \binom{\lambda}{b}\binom{\lambda-b}{b}
\end{equation*}
where dividing both sides by $\binom{\lambda}{b}\binom{\lambda-b}{b}$ yields
\begin{equation*}
\binom{\lambda-2b}{b} = \binom{\lambda-b}{b}-1
\end{equation*}
While the equation above clearly holds true for all $\lambda$ when $b=1$, it doesn't when $b>1$: for instance, if we plug in $b=2$ and $\lambda=6$,
we get $\binom{4}{2} = 2$, which is false.
\end{proof}
Now, to find a generalized form of (FRT 1) and (FRT 2), we can summon $G^b$ again to our aid. For instance, we know from (2)
that $\mathcal{P}(G^b,\lambda,1)=(b!)^{|V_G|}\cdot \mathcal{P}(G,\lambda,b)$, and we know from (FRT 2) that
\begin{equation*}
\mathcal{P}(G^b,\lambda,1)=\mathcal{P}(G^b-uv,\lambda,1)-\mathcal{P}(G^b/uv,\lambda,1)
\end{equation*}
Thus we combine the above with (2) and obtain
\begin{equation}
\mathcal{P}(G,\lambda,b)=\frac{\mathcal{P}(G^b-uv,\lambda,1)-\mathcal{P}(G^b/uv,\lambda,1)}{(b!)^{|V_G|}}
\end{equation}
And doing the same to (FRT 1) gives us
\begin{equation}
\mathcal{P}(G,\lambda,b)=\frac{\mathcal{P}(G^b+uv,\lambda,1)+\mathcal{P}(G^b/uv,\lambda,1)}{(b!)^{|V_G|}}
\end{equation}
the generalization of the Fundamental Reduction Theorem we were after.

\end{document}